\newtheorem*{theorem*}{Theorem}
\newtheorem*{proposition*}{Proposition}
\newtheorem{theorem}{Theorem}[section]
\newtheorem{lemma}[theorem]{Lemma}
\newtheorem{corollary}[theorem]{Corollary}
\newtheorem{proposition}[theorem]{Proposition}
\theoremstyle{definition}
\newtheorem{definition}[theorem]{Definition}
\theoremstyle{remark}
\newtheorem{remark}[theorem]{Remark}
\numberwithin{equation}{section}
\author{Arpan Kabiraj}
\address{Chennai Mathematical Institute, Chennai, India}
\email{arpan.into@gmail.com}
\author{T. V. H. Prathamesh}
\address{Indian Statistical Institute, Chennai, India}
\email{prathamesh.t@gmail.com}
\author{Rishi Vyas}
\email{vyas.rishi@gmail.com}
\title[Elementary equivalence in Artin groups]{Elementary equivalence in Artin groups of finite type}
\thanks{{\em Mathematics Subject Classification} 2010. Primary: 20F10, 20F36, Secondary: 03C07, 20F65.}
\keywords{Artin groups, braid groups, elementary equivalence, mapping class groups.}
\begin{document}
\begin{abstract}
Irreducible Artin groups of finite type can be parametrized via their associated Coxeter diagrams into six sporadic examples and four infinite families, each of which is further parametrized by the natural numbers. Within each of these four infinite families, we investigate the relationship between elementary equivalence and isomorphism. For three out of the four families, we show that two groups in the same family are equivalent if and only if they are isomorphic; a positive, but weaker, result is also attained for the fourth family. In particular, we show that two braid groups are elementarily equivalent if and only if they are isomorphic. The  $(\forall\exists\forall)^{1}$ fragment suffices to distinguish the elementary theories of the groups in question.  

As a consequence of our work, we prove that there are infinitely many elementary equivalence classes of irreducible Artin groups of finite type. We also show that mapping class groups of closed surfaces - a geometric analogue of braid groups - are elementarily equivalent if and only if they are isomorphic.
\end{abstract}
\maketitle
\section*{Introduction}

Understanding when two non-isomorphic groups have distinct elementary theories has been a long-standing problem of interest in both group theory and model theory. In general, this problem is fairly difficult. Much of the current literature considers families of groups parametrised in a certain fashion, and attempts to determine to what extent these parameters are determined by the elementary theories of the groups in question. A particularly celebrated result, which follows from the work of of Sela \cite{Sela} and independently, Kharlampovich-Myasnikov \cite{KM}, is that the elementary theory of a non-abelian free group is independent of its rank: this resolved a famous question of Tarski. Some other important classes of groups for which something is known are listed below (this list is not meant to be exhaustive):
\begin{enumerate}
	\item  Finitely generated free abelian groups, by W. Szmielew \cite{Szw} (1955).
    \item  Ordered abelian groups, by A. Robinson and E. Zakon \cite{Rob} (1960), M. Kargapolov \cite{Kag} (1963) and Y. Gurevich \cite{Gur} (1964).
    \item  Classical linear groups, by A. Maltsev \cite{Mal} (1961).
    \item  Linear groups over the integers, by V. Durlev \cite{Dur} (1995).
    \item  Some linear and algebraic groups, by E. Bunina and A. Mikhal\"{e}v \cite{BunM} (2000).
    \item  Chevalley groups, by E. Bunina \cite{Bun} (2001).
    \item Right-angled Coxeter groups and graph products of finite abelian groups, by M. Casals-Ruiz, I. Kazachkov and V. Remeslennikov \cite{MKR} (2008).
\end{enumerate} 

For a detailed survey of what is known about the elementary theory of various classes of groups see \cite{BM} (also see \cite{MKR}). By studying examples of groups with different elementary theories, we can gain insight into the nature of first order statements in group theory.  

In this paper, we are primarily concerned with irreducible Artin groups of finite type: archetypical examples of such groups are braid groups. To every Coxeter matrix $C$ we can associate two groups: the Artin group $G_{C}$ and the Coxeter group $\bar{G}_{C}$. The Artin group $G_{C}$ is said to be of finite type if $\bar{G}_{C}$ is finite. The Coxeter diagrams corresponding to irreducible Artin groups of finite type have been completely classified, and can be organized into four infinite families (indexed by the natural numbers, so $A_{n}$, $B_{n}$, $D_{n}$ and $I_{2}(n)$) and six sporadic examples (the reader should note that for very small $n$, isomorphism classes of groups in the four infinite families may overlap). Details will be provided in Section \ref{art-groups}.

In this paper, we study elementary equivalence classes within (not between!)\footnote{The proofs of Lemma \ref{main lemma} and Theorem \ref{main theorem}, however, are strong enough to distinguish some groups between these classes.}  these four families. Our main result, Theorem \ref{main theorem}, is that within three out of these four families ($A_{n}$, $B_{n}$ and $D_{n}$), elementary equivalence class determines isomorphism class, and thus the parameter $n$. For the family $I_{2}(n)$ a weaker statement is attained. An immediate consequence of our work here is that there are infinitely many classes of elementary theories amongst Artin groups of finite type. Moreover, we show that all the above results hold true within the $(\forall\exists\forall)^1$ fragment of the elementary theory. 

Braid groups are examples of irreducible Artin groups of finite type of particularly significant interest. The following result follows as a corollary of Theorem \ref{main theorem}:
\begin{theorem*} 
Any two braid groups are elementarily equivalent if and only if they are isomorphic.
\end{theorem*}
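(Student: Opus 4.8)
The plan is to reduce the statement entirely to Theorem \ref{main theorem} by identifying braid groups with the members of the family $A_n$. Recall that the braid group $B_{k}$ on $k$ strands is precisely the Artin group of finite type associated to the Coxeter diagram $A_{k-1}$: it is generated by $\sigma_1,\dots,\sigma_{k-1}$ subject to the braid and commutation relations encoded by that diagram. Thus, up to the index shift $n=k-1$, the collection of braid groups on at least two strands is exactly the infinite family $\{G_{A_n}\}_{n\ge 1}$ studied in the theorem, and the biconditional splits into the two directions treated below. A useful preliminary observation is that since every braid group lies in the single family $A_n$, no comparison between distinct families is needed here.

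The direction ``isomorphic $\Rightarrow$ elementarily equivalent'' requires no group-theoretic input: an isomorphism of groups is in particular an isomorphism of first-order structures in the language of groups, and isomorphic structures satisfy exactly the same sentences. This holds for any pair of braid groups whatsoever.

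For the substantive direction, suppose $B_{k}$ and $B_{\ell}$ are elementarily equivalent. Writing $B_{k}=G_{A_{k-1}}$ and $B_{\ell}=G_{A_{\ell-1}}$, Theorem \ref{main theorem} applied to the family $A_n$ forces $k-1=\ell-1$, hence $k=\ell$ and $B_{k}\cong B_{\ell}$. The only points needing separate attention are the degenerate small-strand cases sitting at or below the bottom of the range covered by the theorem, namely the trivial group $B_{1}$ and the abelian group $B_{2}\cong\mathbb{Z}$. These are handled directly in first-order logic: being abelian is expressed by the sentence $\forall x\,\forall y\,(xy=yx)$, which separates $B_{1}$ and $B_{2}$ from every non-abelian braid group, while $\forall x\,(x=1)$ separates $B_{1}$ from $B_{2}$. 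Hence no two distinct braid groups among these degenerate cases can be elementarily equivalent either.

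Because all of the real work is carried by Theorem \ref{main theorem}, I do not expect a substantial obstacle: the only care required is to fix the correct indexing convention relating $B_{k}$ to $A_{k-1}$ and to verify that the finitely many degenerate braid groups falling outside the theorem's stated range are separated by the elementary sentences above. I would also remark, matching the strength claimed for Theorem \ref{main theorem}, that the separating sentences can be taken within the $(\forall\exists\forall)^{1}$ fragment, so the corollary inherits this refinement at no extra cost.
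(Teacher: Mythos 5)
Your proposal is correct and follows essentially the same route as the paper, whose entire proof is the identification of the braid group on $k$ strands with the Artin group $A_{k-1}$ followed by an appeal to Theorem \ref{main theorem}(1). Your additional care with the degenerate cases $B_1$ and $B_2\cong\mathbb{Z}$ (separated by padded $(\forall\exists\forall)^1$ sentences expressing triviality and abelianness) is a sensible supplement that the paper's one-line proof glosses over, but it does not change the substance of the argument.
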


We prove the above results by explicitly constructing a class of first-order sentences $\{\Phi_n\}_{n\in \mathbb{N}}$ to help us distinguish elementary theories; $\Phi_n$ expresses the notion that every central element has an $n^{\mathrm{th}}$ root. 

Irreducible Artin groups of finite type can be treated as algebraic generalizations of braid groups. The natural generalization in terms of geometric group theory would be the mapping class groups, as braid groups occur as mapping class groups of punctured discs. Mapping class groups of surfaces with non-empty boundary and punctures  along with mapping class groups of closed surfaces are two of the most significant classes of  groups in geometric group theory. Using a result \cite[Theorem 7.5]{PMCG} about cyclic subgroups of such groups, it becomes straightforward to prove the following result:

\begin{proposition*}
Let $Mod(S_g)$ denote the mapping class group of a closed surface $S_g$ of genus $g$. $Mod(S_g)$ is elementarily equivalent to $Mod(S_h)$ if and only if $g=h$.  
\end{proposition*}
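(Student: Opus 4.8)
The plan is to mimic the strategy already used for Artin groups: find a family of first-order sentences $\{\Phi_n\}$ that detects a numerical invariant distinguishing the groups, where $\Phi_n$ asserts that every central element admits an $n$-th root. For this to work in the setting of mapping class groups of closed surfaces, I first need to understand the center of $Mod(S_g)$ and the divisibility properties of central elements.

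First I would recall the well-known fact that for a closed surface $S_g$ of genus $g \geq 3$, the mapping class group $Mod(S_g)$ has trivial center, while for $g=1$ and $g=2$ the center is finite and nontrivial (the hyperelliptic involution generates the center when $g=2$). Since trivial-center groups automatically satisfy every sentence $\Phi_n$ (the only central element is the identity, which has an $n$-th root for all $n$), the sentences $\Phi_n$ alone cannot separate the groups when $g,h \geq 3$. This tells me that central elements are the wrong invariant here, and the correct approach must instead exploit torsion or the structure of finite-order (torsion) elements, which is precisely where \cite[Theorem 7.5]{PMCG} enters: that result presumably classifies cyclic subgroups, and in particular controls the possible orders of finite-order elements in $Mod(S_g)$.

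The key step, then, is to extract from \cite[Theorem 7.5]{PMCG} a genus-dependent numerical invariant expressible in first-order logic. The natural candidate is the maximal order of a torsion element: by classical results (Wiman, Harvey), the maximal order of a finite-order mapping class on $S_g$ is $4g+2$ for $g \geq 2$, and more generally the set of orders of torsion elements is a genus-dependent subset of the integers. I would formulate, for each $n$, a sentence $\Psi_n$ asserting the existence of an element of order exactly $n$ (this is first-order: $x^n = 1 \wedge \bigwedge_{1 \le k < n} x^k \neq 1$), and observe that the set $\{n : Mod(S_g) \models \Psi_n\}$ depends on $g$. To separate $Mod(S_g)$ from $Mod(S_h)$ for $g \neq h$, it suffices to exhibit a single $n$ realized as a torsion order in one but not the other, e.g.\ via the top order $4g+2$, which is strictly increasing in $g$ and so distinguishes all genera at once.

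The main obstacle I anticipate is purely arithmetic bookkeeping rather than logical: the top torsion order $4g+2$ is monotone in $g$, but I must verify that no mapping class group of smaller genus contains an element of that exact order, so that $\Psi_{4g+2}$ genuinely isolates genus $g$ from all $h < g$ (and the monotonicity handles $h > g$ since $4h+2 > 4g+2$). I would use the Riemann–Hurwitz constraints on orders of automorphisms of Riemann surfaces to pin down precisely which orders occur at each genus, being careful about the low-genus cases $g=0,1,2$ where $Mod(S_g)$ is finite or virtually abelian and the torsion spectrum behaves differently. Once the invariant is shown to be first-order definable and genus-determining, the biconditional follows immediately, since isomorphic groups are trivially elementarily equivalent.
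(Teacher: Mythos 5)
Your proposal is correct and is essentially the paper's own proof: the paper uses exactly the sentence $\Psi_{4g+2}$ (existence of an element of order precisely $4g+2$) together with \cite[Theorem 7.5]{PMCG}, distinguishing $Mod(S_g)$ from $Mod(S_h)$ by the sentence attached to the larger of the two genera. The extra Riemann--Hurwitz bookkeeping you anticipate is unnecessary: the upper-bound half of that theorem already guarantees that $Mod(S_h)$ contains no element of order $4g+2$ whenever $h<g$.
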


This paper is organized along the following lines. Section \ref{two} contains a preliminary introduction to the first-order theory of groups; we also provide a short proof of the above proposition on elementary equivalence in the context of mapping class groups of closed surfaces. Section \ref{art-groups} is an overview of the theory of Artin groups of finite type: we list the basic definitions and results that will be used over the course of this document. A reader familiar with the theory of Artin groups may skip reading this section in detail, but we still recommend they take a quick glance in order to familiarise themselves with the notation used. Section \ref{art-mon} and \ref{art-rot} contain some key lemmas about Artin monoids and roots of central elements in irreducible Artin groups of finite type. Section \ref{elem} contains our main results concerning elementary equivalence in irreducible Artin groups of finite type.  \\

\textbf{Acknowledgements}: All three authors would like to thank the Institute of Mathematical Sciences, Chennnai and the Chennai Mathematical Institute for their support and hospitality. The first author is supported by the Department of Science \& Technology (DST): INSPIRE Faculty. The second author would like to thank Siddhartha Gadgil and Igor Rivin for (independently) suggesting the question considered here in the context of braid groups. 

\section{Logical Preliminaries and Mapping Class Groups}\label{two}

\subsection{First Order Logic}

This section contains a very brief introduction to first order logic in the context of group theory. It contains only those definitions which are pertinent to our work and context. For a broader and more comprehensive introduction to first-order logic, the reader is referred to \cite{HW}.

The first-order language of groups $\mathcal{L}_G$ is the tuple $(\cdot,\ ^{-1},\ 1)$, where $\cdot$ refers to the multiplication, $^{-1}$  is the multiplicative inverse and $1$ is the multiplicative identity.

An \textit{atomic formula} with variables $x_1,\ldots,x_n$ in $\mathcal{L}_G$  is a  statement of the form: 
\[{x_1}^{\varepsilon_1} \cdot {x_2}^{\varepsilon_2}\cdot \ldots \cdot {x_n}^{\varepsilon_n} =1,\]
where each  $\varepsilon_i \in \{\pm  1 \}$.  A \textit{quantifier free formula} in $\mathcal{L}_G$ is recursively defined as either an atomic formula, the negation of a quantifier free formula, the conjunction of finitely many quantifier free formulas, or a disjunction of finitely many quantifier free formulas. 

A \textit{sentence} in $\mathcal{L}_G$ is a  statement of the following form:
\[ Q_1 x_1. Q_2 x_1 \ldots Q_n x_n. \big( \Phi(x_1, x_2, x_3,\ldots ,x_n) \big), \]
where each $Q_i \in \{ \exists, \forall\}$, and $\Phi(x_1, x_2,\ldots , x_n)$ is a quantifier free formula with variables $x_1$, $x_2,\ldots ,x_n$. 

The set of all sentences which are hold in the group $G$ is called the \textit{elementary theory} of $G$. It is denoted by $\operatorname{Th}(G)$.

The elementary theory of a finite group determines the group up to isomorphism. This is no longer true for infinite groups: for instance, all finitely generated free groups have the same elementary theory (see  \cite{Sela}, \cite{KM}).

The class of sentences in the first-order language of groups is strong enough to describe admission of roots of central elements in a group. Consider the following statement:
\[\Phi_n =  \forall x. \exists y. \forall z. (\neg (xz = zx) \vee (x = y^n)).\]
$\Phi_n$ is true in a group $G$ precisely when every central element admits an $n^{\mathrm{th}}$ root. 

One can also describe the existence of a finite cyclic subgroup of order $n$ in a group by the following sentence: 
\[\Psi_n =  \exists x. ((x^n = 1) \wedge_{k=1}^{n-1} (x^k \neq 1)).\]

A sentence is of the class $(\forall \exists \forall)^{1}$ if it is of the form $\forall x \exists y\forall z(\Psi (x,y,z))$. This is a well studied class of sentences called the\textit{ Kahr class} (see Chapter 3.1 of \cite{BGG}).

\subsection{Mapping Class Groups}\label{mcg}
In this subsection, we discuss elementary equivalence in the context of mapping class groups of closed surfaces: using a result from the literature, we are able to provide a short proof of the fact that elementary equivalence determines isomorphism class. The reader may consider the material here as motivation for our results on Artin groups of finite type - it is an easy example of how explicit first order sentences can be used to distinguish elementary equivalence classes.

Let $S_g$ be a closed orientable surface of genus $g\geq 2$. The mapping class group $Mod(S_g)$ of the surface $S_g$ is the group of all homotopy classes of orientation preserving homeomorphisms of $S_g$. In this section we show that if $g\neq h$, then the elementary theories of $Mod(S_g)$ and $Mod(S_h)$ are not equivalent; indeed, the $(\exists)^{1}$ fragment of the elementary theory suffices to distinguish the elementary theories in question. 

To prove this result we need the following theorem about finite cyclic subgroups of mapping class groups. The second part of the theorem follows from the statement after Theorem 7.5 in \cite{PMCG}.

\begin{theorem}{\cite[Theorem 7.5]{PMCG}}\label{tormcg}
	The order of a finite cyclic subgroup of the mapping class group  $Mod(S_g)$ is at most $4g+2$. Moreover for every $g\geq 2$, there exists an element of order $4g+2$ in the mapping class group $Mod(S_g)$.
\end{theorem}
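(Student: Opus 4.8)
The plan is to reduce the problem to the study of cyclic groups of automorphisms of a Riemann surface and then apply the Riemann--Hurwitz formula, combined with the special arithmetic constraints that govern cyclic branched covers.

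First I would invoke Nielsen realization: a finite cyclic subgroup $\langle\phi\rangle \le Mod(S_g)$ of order $N$ can be realized by a group of isometries of some hyperbolic metric on $S_g$ (equivalently, by conformal automorphisms of a Riemann surface $X$ of genus $g$); for cyclic groups this is Nielsen's classical theorem. The quotient $\mathcal O = X/\langle\phi\rangle$ is then a closed $2$-orbifold of some genus $h$ with cone points of orders $m_1,\dots,m_r$ (each $m_i \mid N$), and comparing hyperbolic areas (Gauss--Bonnet / Riemann--Hurwitz) gives
\[ \frac{2g-2}{N} \;=\; (2h-2) + \sum_{i=1}^r\Bigl(1-\tfrac1{m_i}\Bigr). \]
Since $g\ge 2$ the left side is positive, so the orbifold is hyperbolic; maximizing $N$ amounts to minimizing the right side subject to the constraints imposed by cyclicity.

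For the upper bound I would run a short case analysis on $(h,r)$. If $h\ge 2$, or $h=1$ (forcing $r\ge1$), or $h=0$ with $r\ge 5$, the right side is at least $1/2$, giving $N\le 4g-4$. The cases $h=0,r=4$ and, crucially, $h=0,r=3$ require the arithmetic constraints special to \emph{cyclic} covers: for a sphere base the monodromies around the cone points generate $\mathbb Z/N$ with product $1$, which forces the $\operatorname{lcm}$ of every proper subcollection of the $m_i$ to equal $N$. This is exactly what rules out data like $(2,3,7)$ (which would otherwise produce the much larger Hurwitz value) and pins the extremal configuration down. In the binding case $h=0,r=3$ the desired bound $N\le 4g+2$ is equivalent, via the displayed formula, to the elementary inequality
\[ N\Bigl(\tfrac{2}{m_1}+\tfrac{2}{m_2}+\tfrac{2}{m_3}-1\Bigr)\ \le\ 6, \]
which I would verify from the pairwise-$\operatorname{lcm}$ conditions, with equality occurring precisely for the branch data $(m_1,m_2,m_3)=(2,\,2g+1,\,4g+2)$.

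For the realization (the ``moreover'' clause) I would exhibit the extremal action concretely. On the genus-$g$ curve $X:\,y^2=x^{2g+1}-1$ the map $\phi(x,y)=(\zeta x,-y)$ with $\zeta=e^{2\pi i/(2g+1)}$ is a well-defined automorphism satisfying $\phi^k(x,y)=(\zeta^k x,(-1)^k y)$, hence of order exactly $4g+2$; the quotient $X/\langle\phi\rangle$ is the sphere with branch orders $(2,2g+1,4g+2)$, matching the extremal data above. Since for $g\ge 2$ the natural map from the automorphism group to $Mod(S_g)$ is injective, $\phi$ determines a mapping class of order $4g+2$. The main obstacle is the middle step: carrying out the case analysis while correctly encoding the cyclic-cover ($\operatorname{lcm}$) conditions, since the naive orbifold minimization ignores cyclicity and would wrongly suggest the Hurwitz bound $84(g-1)$; it is precisely these number-theoretic constraints that force the answer down to $4g+2$.
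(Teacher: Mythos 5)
The paper does not prove this statement at all: it is quoted from Farb--Margalit \cite[Theorem 7.5]{PMCG}, with the realization clause attributed to the remark immediately following that theorem. So the only meaningful comparison is with the proof in the cited source, and your outline is essentially that proof (Wiman's classical argument as presented in the Primer): Nielsen realization for cyclic subgroups, passage to the quotient orbifold, the Riemann--Hurwitz/Gauss--Bonnet identity, and a case analysis on $(h,r)$ in which the monodromy constraints special to \emph{cyclic} branched covers eliminate the small-area data such as $(2,3,7)$ and pin the extremal datum to $(2,\,2g+1,\,4g+2)$. Where you genuinely differ is the realization: Farb--Margalit obtain an element of order $4g+2$ from the rotation of a regular $(4g+2)$-gon with suitable boundary identifications, while you use Wiman's curve $y^2=x^{2g+1}-1$ with $\phi(x,y)=(\zeta x,-y)$. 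Your verification there is correct and complete: $\phi^{2g+1}$ is the hyperelliptic involution, $\phi$ has order $\operatorname{lcm}(2,2g+1)=4g+2$, and injectivity of $\operatorname{Aut}(X)\to Mod(S_g)$ for $g\ge 2$ converts the automorphism into a mapping class of that order. The algebraic model is a clean substitute for the Euclidean one and arguably makes the branch data $(2,2g+1,4g+2)$ more transparent.

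Two caveats on the upper-bound half. First, the cyclicity constraint is misstated: it is not that \emph{every} proper subcollection of the $m_i$ has $\operatorname{lcm}$ equal to $N$ (a single cone order need not), but that omitting any one cone point leaves a collection whose $\operatorname{lcm}$ is $N$. For $r=3$ this reduces to your pairwise condition, so the binding case is unharmed, but you need the correct form for $r=4$. Second, the crux is asserted rather than executed: your reformulation $N\bigl(\tfrac{2}{m_1}+\tfrac{2}{m_2}+\tfrac{2}{m_3}-1\bigr)\le 6$ is indeed equivalent to $N\le 4g+2$ via Riemann--Hurwitz (it eliminates $g$, which is legitimate since $g$ is determined by the data), but it still must be checked over all admissible triples of divisors of $N$ with pairwise $\operatorname{lcm}$ equal to $N$; and the $r=4$ case is not dispatched by an area bound alone, since data of orbifold area $\tfrac13$ only give $N\le 6g-6$, which exceeds $4g+2$ for $g>4$ --- one must use that such data (e.g.\ $(2,2,3,3)$) force $N=6$ and $g=2$. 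These are finite, routine checks and the plan does go through, but in a blind proof they are precisely where the work lies.
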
  

\begin{proposition}
	The elementary theories of $Mod(S_g)$ and $Mod(S_h)$ are distinct for $g\neq h$.
\end{proposition}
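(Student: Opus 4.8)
The plan is to exploit Theorem \ref{tormcg}, which pins down the maximal order of a finite cyclic subgroup of $Mod(S_g)$ as exactly $4g+2$, and to encode this order-theoretic invariant as a first-order sentence. The key observation is that the recipe $\Psi_n$ already provided in the logical preliminaries expresses precisely the existence of an element of order $n$ (equivalently, a cyclic subgroup of order $n$). So the strategy reduces to: (i) identify an integer $N_g$ that is realized as the order of a finite cyclic subgroup in $Mod(S_g)$ but \emph{not} in $Mod(S_h)$ when $g \neq h$, and (ii) argue that $\Psi_{N_g}$ then distinguishes the two elementary theories.

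First I would observe that by Theorem \ref{tormcg}, the group $Mod(S_g)$ contains an element of order $4g+2$, so $\Psi_{4g+2}$ holds in $Mod(S_g)$. On the other hand, the same theorem says every finite cyclic subgroup of $Mod(S_h)$ has order at most $4h+2$. Hence, assuming without loss of generality that $g > h$, we have $4g+2 > 4h+2$, so $Mod(S_h)$ contains no element of order $4g+2$, and therefore $\Psi_{4g+2}$ is false in $Mod(S_h)$. Since $\Psi_{4g+2}$ holds in one group and fails in the other, the two groups satisfy different sets of sentences and are thus not elementarily equivalent. Note that $\Psi_{4g+2}$ is purely existential, which is what yields the claim (mentioned in the surrounding text) that the $(\exists)^1$ fragment already suffices.

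The main subtlety to be careful about is verifying that $\Psi_n$ genuinely captures ``there exists an element of order exactly $n$'' rather than merely ``an element $x$ with $x^n = 1$''. The conjunction $\bigwedge_{k=1}^{n-1}(x^k \neq 1)$ rules out every proper power being trivial, so any witness $x$ has order exactly $n$; conversely an element of order $n$ witnesses the sentence. Thus $\Psi_n$ holds in a group if and only if that group has an element of order $n$, equivalently a cyclic subgroup of order $n$. With this correspondence in hand the argument is immediate and requires no further computation; the only real content is the sharp bound of Theorem \ref{tormcg}, which we are assuming. I do not anticipate a genuine obstacle here — the result is a clean application of a known order bound — but the one point demanding care is the strictness of the inequality $4g + 2 > 4h + 2$ for $g > h$, ensuring that the order witnessed in $Mod(S_g)$ strictly exceeds the maximum attainable in $Mod(S_h)$.
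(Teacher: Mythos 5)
Your proposal is correct and is essentially the paper's own proof: both use the sentence $\Psi_{4g+2}$ together with Theorem \ref{tormcg} to exhibit an element of order $4g+2$ in $Mod(S_g)$ that cannot exist in $Mod(S_h)$. If anything, your explicit reduction to the case $g>h$ (so that $4g+2$ strictly exceeds the bound $4h+2$) is slightly more careful than the paper's phrasing, which asserts falsity in $Mod(S_h)$ for all $h\neq g$ without noting that the cited bound only gives this directly when $g>h$.
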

\begin{proof}
	Consider the following first order statement statement:$$\Psi_{4g+2} = \exists x. (\left(x^{4g+2}=1\right) \wedge \left(x^{k}\neq 1 \,\, \text{for}\,\, k\in\{1,2,\ldots , 4g+1\}\right)).$$
	By Theorem \ref{tormcg}, the above statement is true in $Mod(S_g)$ but false in $Mod(S_h)$ for $g\neq h$.
\end{proof}

\section{Artin Groups} \label{art-groups}

We recall the definition and basic algebraic properties of Artin groups of finite type and Coxeter groups. For detailed exposition and proofs of the results mentioned here see \cite{Artin-Cox} and \cite[\S 1, Chapter IV]{Bourbaki}.

Suppose $C=(m_{i,j})$ denotes a $n\times n$  symmetric matrix with $(i,j)^{\mathrm{th}}$ entry $m_{i,j},$  where $m_{i,i}=1$ and $m_{i,j}\in \{2,3,\ldots ,\infty\}$ for $i\neq j$. Such a matrix is called a \emph{Coxeter matrix}. 

To every Coxeter matrix $C$ we can associate a labelled graph. If $C$ is an $n\times n$-matrix its associated graph has $n$ labelled ordered vertices, say $x_{1},\ldots,x_{n}.$ There is an edge between $x_{i}$ and $x_{j}$ with label $m_{i,j}$ if and only if $m_{i,j}\geq 3$; it is a convention to drop the label if $m_{i,j} = 3$. Such a labelled graph is called a \emph{Coxeter graph} or \emph{Coxeter diagram}. Coxeter matrices are in a canonical one-to-one correspondence with Coxeter diagrams. We will treat these two notions interchangeably in this paper. 

Let $\langle x,y\rangle^m$ denote the alternating product of $x$ and $y$ of length $m$ starting with $x$ (e.g. $
\langle x,y\rangle^3=xyx$). By convention, $\langle x,y\rangle^{\infty}$ is the empty product. 

\begin{definition} \label{defnartgroup}
Let $C$ be a Coxeter matrix with $(i,j)^{\mathrm{th}}$ entry $m_{i,j}$. The Artin group corresponding to $C$, $G_{C}$, is the group presented by the following presentation:
	\begin{align*}
	\Big\langle x_1,x_2,\ldots,x_n\, \Big| \, \langle x_i, x_j \rangle^{m_{i,j}}  =\langle x_j, x_i \rangle^{m_{j,i}},\, i,j\in \{1,\ldots, n\} \Big\rangle.
	\end{align*}
\end{definition}

The presentation used in the definition above is called the \textit{standard presentation} of an Artin group $G_{C}$. We shall from here on assume that unless stated otherwise, the terms presentation, generators and relations used in the context of an Artin group refer to the standard presentation and the associated generators and relations respectively. 

\begin{definition} \label{defncoxgroup}
Let $C$ be a Coxeter matrix with $(i,j)^{\mathrm{th}}$ entry $m_{i,j}$. The Coxeter group corresponding to $C$, $\bar{G}_{C}$, is the group presented by the following presentation:
	\begin{align*}
	\Big\langle x_1,x_2,\ldots,x_n\, \Big| \, (x_i x_j)^{m_{i,j}}  =1,\,
	i,j\in \{1,\ldots, n\}\,\, \text{and}\,\,m_{i,j}\neq\infty \Big\rangle.
	\end{align*}
\end{definition}

As in the Artin group case, we call the above presentation the standard presentation of a Coxeter group. The terms presentation, generators, and relations used in the context of a Coxeter group will refer to the standard ones. It is straightforward to check that a Coxeter group $\bar{G}_{C}$ with generators $x_1,x_2,\ldots ,x_n$ is the quotient of the Artin group with the same set of generators by the relation $x_i^2=1$ for all $i\in \{1,2,\ldots ,n\}$. 

The Coxeter diagram $C$ can be recovered from the Coxeter group $\bar{G}_{C}$  \emph{and} its standard generators. The vertices of the graph correspond to the generators, and $m_{i,j}$ can be recovered from the order of $x_{i}x_{j}$ in $\bar{G}_{C}$. 

An Artin group $G_{C}$ is said to be of \emph{finite type} if the Coxeter group $\bar{G}_{C}$ associated to $C$ is finite. An Artin group $G_{C}$ and the corresponding Coxeter group $\bar{G}_{C}$ are called \emph{irreducible} if the associated Coxeter diagram $C$ is connected. Throughout this paper we assume all Artin groups to be irreducible and of finite type unless otherwise mentioned, though we may mention this hypothesis explicitly on occasion for the sake of clarity. 

\begin{figure}[h] \label{centergen}
	\centering
	\includegraphics[trim = 5mm 220mm 20mm 10mm, clip, width=12.5cm]{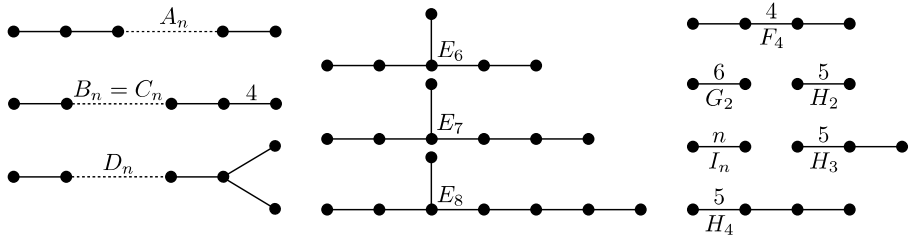}
	\caption{Coxeter diagrams corresponding to irreducible Coxeter groups of finite type. (P.C.-Wikimedia Commons.)}\label{coxfinite}
\end{figure}

Coxeter classified all irreducible Artin groups of finite type. In this case the Coxeter diagram is always a tree. There are four infinite families $A_n,B_n=C_n,D_n,$ and $I_2(n)$, and six distinct groups $E_6$, $E_7$, $E_8$, $F_4$, $H_3$, $H_4$ (Figure \ref{coxfinite}). In this paper, we will restrict our attention to groups within the four infinite families. We will denote the Artin groups associated to these families by the same notation i.e. by $A_{n}$, not $G_{A_{n}}$. 

When the diagram $C$ is either clear from the context or irrelevant, we may suppress it and denote the Artin group by $G$ and the corresponding Coxeter group by $\bar{G}$. However, we are obliged to remind the reader once more: in this paper, an Artin group means a group presented by a presentation associated to one of the diagrams in Figure \ref{coxfinite} \textit{along} with the data of its presentation. 

Let $G$ be an irreducible Artin group of finite type with generating set $I=\{x_1,x_2,\ldots , x_n\}$. Up to ordering, there is a unique partition of $I$ into two maximal disjoint subsets $J_1$ and $J_2$ such that the elements in $J_1$ (respectively $J_2$) commute pairwise in $\bar{G}$. Let 
$$\mathcal{J}_1=\prod_{x_i\in J_1}x_i,\hspace*{3mm} \mathcal{J}_2=\prod_{x_j\in J_2}x_j\hspace*{3mm}\text{and}\hspace*{3mm} \mathcal{J}=\mathcal{J}_1\mathcal{J}_2.$$ 
For every irreducible Artin group of finite type, there exists a corresponding natural number called the \textit{Coxeter number}. In Table \ref{numerical} below, we list the  Coxeter numbers associated to some Artin groups of finite type; for more information about this number see \cite[\S 6, Chapter IV]{Bourbaki}. Let $h$ be the Coxeter number of $G$, and define

\begin{equation} \label{Garsideelement}
\Delta:= 
\begin{cases}
\mathcal{J}^{\frac{h}{2}}& \text{ if } h \text{ is even, }\\
\mathcal{J}^{\frac{h-1}{2}}\mathcal{J}_1=\mathcal{J}_2\mathcal{J}^{\frac{h-1}{2}}             & \text{ if } h \text{ is odd. }
\end{cases}
\end{equation}

The following theorem follows from \cite[Lemma 5.8]{Artin-Cox} and the proposition following it in \textit{loc.~ cit.}

\begin{theorem} \label{delta-roots}
	For any irreducible Artin group of finite type $G$ with Coxeter number $h$ we have $\Delta^2=\mathcal{J}^h$. Furthermore, if $\Delta$ is in the center of $G$ then $
	\Delta=\mathcal{J}^{\frac{h}{2}}$.
\end{theorem}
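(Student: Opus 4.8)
The plan is to handle the two assertions separately, drawing everything from the bipartite description of $\Delta$ recorded in \eqref{Garsideelement}. The genuinely nontrivial content of that description — which is what \cite[Lemma 5.8]{Artin-Cox} supplies — is precisely the braid-type identity $\mathcal{J}^{\frac{h-1}{2}}\mathcal{J}_1 = \mathcal{J}_2\mathcal{J}^{\frac{h-1}{2}}$ in the odd case, equivalently $\langle \mathcal{J}_1,\mathcal{J}_2\rangle^{h} = \langle \mathcal{J}_2,\mathcal{J}_1\rangle^{h}$. I would take this identity as given and reduce the theorem to short manipulations.

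For the first assertion, when $h$ is even there is nothing to do: $\Delta = \mathcal{J}^{h/2}$ by definition, so $\Delta^2 = \mathcal{J}^h$. When $h$ is odd I would use the first expression for the left factor of $\Delta^2$ and the second for the right factor, and then collapse using $\mathcal{J}=\mathcal{J}_1\mathcal{J}_2$:
$$\Delta^2 = \big(\mathcal{J}^{\frac{h-1}{2}}\mathcal{J}_1\big)\big(\mathcal{J}_2\mathcal{J}^{\frac{h-1}{2}}\big) = \mathcal{J}^{\frac{h-1}{2}}(\mathcal{J}_1\mathcal{J}_2)\mathcal{J}^{\frac{h-1}{2}} = \mathcal{J}^{\frac{h-1}{2}}\,\mathcal{J}\,\mathcal{J}^{\frac{h-1}{2}} = \mathcal{J}^{h}.$$
Thus the first statement is immediate once both expressions for $\Delta$ are in hand.

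For the second assertion the real point is that centrality of $\Delta$ forces $h$ to be even, so that $\mathcal{J}^{h/2}$ makes sense and agrees with $\Delta$ by definition. I would argue by contradiction: assume $\Delta$ is central and $h$ is odd. From $\Delta = \mathcal{J}^{\frac{h-1}{2}}\mathcal{J}_1$ I get $\mathcal{J}^{\frac{h-1}{2}} = \Delta\,\mathcal{J}_1^{-1}$, while from $\Delta = \mathcal{J}_2\mathcal{J}^{\frac{h-1}{2}}$ I get $\mathcal{J}^{\frac{h-1}{2}} = \mathcal{J}_2^{-1}\Delta = \Delta\,\mathcal{J}_2^{-1}$, the last equality using that $\Delta$ commutes with $\mathcal{J}_2$. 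Cancelling $\Delta$ on the left yields $\mathcal{J}_1 = \mathcal{J}_2$. It then suffices to see that this is impossible, after which $h$ must be even and $\Delta = \mathcal{J}^{h/2}$.

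The step requiring the most care — and the one I expect to be the main obstacle — is ruling out $\mathcal{J}_1 = \mathcal{J}_2$, since no abelian quotient detects this in general (e.g. in type $A_n$ with $n$ even the two classes have equal size and all generators are identified in the abelianization). Here I would pass to the Artin monoid $G^{+}$, which embeds into $G$. Every defining relation $\langle x_i,x_j\rangle^{m_{i,j}} = \langle x_j,x_i\rangle^{m_{i,j}}$ has the same set of letters $\{x_i,x_j\}$ on both sides, so the support (the set of generators occurring in a positive word) is a well-defined invariant of an element of $G^{+}$. Since the diagram is a tree, $I = J_1 \sqcup J_2$ is the unique bipartition into independent sets, and $h$ odd forces the rank to be at least $2$, so both $J_1$ and $J_2$ are nonempty. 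Hence $\mathcal{J}_1$ and $\mathcal{J}_2$ are positive words with disjoint nonempty supports $J_1$ and $J_2$, so they are distinct in $G^{+}$ and therefore in $G$, contradicting $\mathcal{J}_1 = \mathcal{J}_2$. The appeal to faithfulness of the Artin monoid is exactly what makes the contradiction go through uniformly across all four families without a case-by-case analysis.
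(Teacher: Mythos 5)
Your proposal is correct, but it takes a genuinely different route from the paper: the paper does not prove Theorem \ref{delta-roots} at all, instead deriving it directly as a citation from \cite[Lemma 5.8]{Artin-Cox} and the proposition following it, and it disposes of the ``centrality forces $h$ even'' point in a subsequent remark by inspection of the classification table. You instead give a self-contained derivation taking only the identity embedded in \eqref{Garsideelement} as input. Your computation $\Delta^2 = \bigl(\mathcal{J}^{\frac{h-1}{2}}\mathcal{J}_1\bigr)\bigl(\mathcal{J}_2\mathcal{J}^{\frac{h-1}{2}}\bigr) = \mathcal{J}^h$ is exactly right, and your contradiction argument for the second assertion is sound: centrality plus the two expressions for $\Delta$ in the odd case force $\mathcal{J}_1 = \mathcal{J}_2$, which you correctly rule out via support considerations in the Artin monoid. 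Note that this last step is precisely the content of the paper's own Proposition \ref{contgen} (proved from \cite[Proposition 5.5]{Artin-Cox}), so there is no circularity in invoking it even though it appears later in the paper; your appeal to the embedding $G^{+}\hookrightarrow G$ is a nontrivial fact, but it is the same Brieskorn--Saito input the paper itself relies on throughout. What your approach buys is uniformity: the evenness of $h$ under centrality falls out of a structural argument valid for all irreducible finite-type diagrams at once, with no appeal to the classification, and your observation that abelianization cannot detect $\mathcal{J}_1 \neq \mathcal{J}_2$ (e.g.\ in $A_n$ for $n$ even) correctly identifies why the monoid argument is actually needed. What the paper's route buys is brevity: given that the classification and the Brieskorn--Saito results are cited anyway for Table \ref{numerical} and the center theorem, the case-by-case check is免 immediate. (One small presentational point: your claim that the bipartition of the tree is \emph{unique} is true but unnecessary; all you need is that $J_1$ and $J_2$ are disjoint and, when $h$ is odd so the rank is at least $2$, both nonempty.)
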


\begin{remark} If $\Delta$ is in the center of an irreducible Artin group of finite type then $h$ is necessarily even: see Table \ref{numerical}.
\end{remark}
 
Indeed, we can say even more. The center of an irreducible Artin group of finite type $G$, $\operatorname{Z}(G)$, is cyclic and is generated by either $\Delta$ or $\Delta^2$. Moreover, we know exactly which of these two elements generates the centre in each of the cases that we care about. We will always refer to this choice of generator of $\operatorname{Z}(G)$ by $c_{G}$.  The following  theorem follows from the Corollary at the end of Section 7 of \cite{Artin-Cox}.
\begin{theorem}
		The center of an irreducible Artin group $G$ of finite type is infinite cyclic. 
		\begin{itemize}
			\item[(1)] For $B_n$, $D_{2n}$ and $I_2(2n)$ the center is generated by $\Delta$.
			\item[(2)] For $A_n$, $D_{2n+1}$ and $I_2(2n+1)$ the center is generated by $\Delta^2$
		\end{itemize}
\end{theorem}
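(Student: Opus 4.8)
The plan is to analyze the center through the conjugation action of the fundamental (Garside) element $\Delta$. The starting point, coming from the Garside structure of a finite-type Artin group, is that conjugation by $\Delta$ preserves the set of standard generators up to a permutation: there is an involution $\sigma$ of $\{1,\ldots,n\}$, induced by a symmetry of the Coxeter diagram, with $\Delta\, x_i\, \Delta^{-1} = x_{\sigma(i)}$ for all $i$. Write $\tau$ for the automorphism ``conjugation by $\Delta$''. Since $\sigma^2 = \mathrm{id}$, we get $\tau^2 = \mathrm{id}$, so $\Delta^2$ commutes with every generator and is therefore central, regardless of the type; and $\Delta$ itself is central precisely when $\sigma = \mathrm{id}$.

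First I would pin down $\sigma$ for each of the six families appearing in the statement. The involution $\sigma$ is exactly the diagram automorphism induced by the action of $-w_0$ on the simple roots of the associated finite Coxeter group (where $w_0$ is the longest element, whose lift is $\Delta$), so $\sigma = \mathrm{id}$ if and only if $w_0 = -1$ acts trivially. A standard case check then gives $\sigma = \mathrm{id}$ for $B_n$, $D_{2n}$ and $I_2(2n)$, while $\sigma$ is the nontrivial diagram involution for $A_n$ (the flip of the path), $D_{2n+1}$ (swapping the two terminal fork vertices) and $I_2(2n+1)$. Combining with the previous paragraph, $\Delta$ is central in the first three families and $\Delta^2$ (but not $\Delta$) is central in the last three; this already produces the asserted generator $c_G$ in each case.

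It remains to prove that $\operatorname{Z}(G)$ is exactly the infinite cyclic group $\langle c_G\rangle$, i.e.\ infinite, cyclic, and containing nothing beyond the asserted powers of $\Delta$. Infiniteness is easy: the total-degree homomorphism $\ell\colon G\to\mathbb{Z}$ sending each $x_i\mapsto 1$ is well defined (both sides of every defining relation have length $m_{i,j}$), and $\ell(\Delta)>0$, so $c_G$ has infinite order. The substantive point, which I expect to be the main obstacle, is that \emph{every} central element is a power of $\Delta$. Here I would invoke the Garside normal form: each $g\in G$ has a unique expression $g = \Delta^m\, s_1\cdots s_k$ with the $s_j$ proper simple factors in left-greedy form, where $m=\inf(g)$ and $k$ behave predictably under conjugation. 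Centrality forces $g$ to be $\tau$-invariant and invariant under the cycling and decycling operations on normal forms; the rigidity of the normal form then forces $k=0$, so $g=\Delta^m$. Finally $\Delta^m$ is central if and only if $\tau^m=\mathrm{id}$, i.e.\ if and only if $m$ is a multiple of the order $e\in\{1,2\}$ of $\tau$, giving $\operatorname{Z}(G)=\langle\Delta^e\rangle=\langle c_G\rangle$. This normal-form rigidity argument is the technical heart, and it is precisely what is packaged in the cited Corollary of \cite{Artin-Cox}.
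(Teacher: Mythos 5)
The paper does not prove this theorem at all: it quotes it as following from the Corollary at the end of Section 7 of \cite{Artin-Cox} (see also the translation \cite{CCCEHJR}). So the relevant comparison is with Brieskorn--Saito's argument. The first half of your proposal is correct and standard: conjugation by $\Delta$ permutes the standard generators via the diagram involution $\sigma$ induced by $-w_{0}$, hence $\Delta^{2}$ is always central, and $\Delta$ is central exactly when $w_{0}=-1$; your case check ($\sigma=\mathrm{id}$ for $B_{n}$, $D_{2n}$, $I_{2}(2n)$, nontrivial for $A_{n}$, $D_{2n+1}$, $I_{2}(2n+1)$) is right, as is the infiniteness argument via the length homomorphism $\lambda$. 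This part genuinely identifies \emph{a} central element $c_{G}$ of infinite order in each case.

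The gap is in the substantive step, namely that $\operatorname{Z}(G)$ contains nothing besides powers of $\Delta$. Your proposed mechanism --- ``centrality forces invariance under cycling and decycling, and rigidity of the normal form then forces $k=0$'' --- is false for Garside groups in general, and your sketch never invokes irreducibility (connectedness of the Coxeter diagram), so it cannot be completed as stated. Concretely, take the reducible Artin group of type $A_{1}\times A_{1}$, i.e.\ $\mathbb{Z}^{2}=\langle x_{1},x_{2}\mid x_{1}x_{2}=x_{2}x_{1}\rangle$ with Garside element $\Delta=x_{1}x_{2}$: the element $x_{1}$ is central, $\tau$-invariant, fixed by cycling and decycling, and has normal form $\Delta^{0}\cdot x_{1}$ with $k=1$; it is not a power of $\Delta$. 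Any correct proof must therefore use connectedness in an essential way. The actual route in \cite{Artin-Cox} is through the \emph{quasi-center}: a central element conjugates each generator to itself, hence lies in the quasi-center; the main result of Section 7 of \cite{Artin-Cox} (this is where connectedness is used) shows that the quasi-center of an irreducible finite-type Artin group is the infinite cyclic group $\langle\Delta\rangle$; intersecting with the centrality condition then yields $\langle\Delta\rangle$ or $\langle\Delta^{2}\rangle$ according to whether $\sigma=\mathrm{id}$. Since you yourself end by deferring this step to the cited Corollary, your write-up amounts to a correct determination of which power of $\Delta$ is central, plus an unsound sketch in place of the one genuinely hard ingredient.
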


In the following table we collect the numerics associated to the irreducible Artin groups of finite type required for our calculations: 

\begin{center} \label{numerical}
	\begin{tabular}{ | m{2cm} | m{2cm} | m{2cm} | m{2cm} | m{2cm} | } 
		\hline
		Group & Rank & Coxeter number ($h$) & Generator of the center ($c_{G}$) & Word length of $c_{G}$ \\
		\hline \hline
		$A_{k}$ & $k$ & $k+1$ & $\Delta^2$ & $k^2+k$ \\ 
		\hline
		$B_k$ & $k$ & $2k$ & $\Delta$ & $k^2$\\
		\hline
		$D_{2k+1}$ & $2k+1$ & $4k$ & $\Delta^2$ & $8k^2+4k$ \\
		\hline
		$D_{2k}$ & $2k$ & $4k-2$ & $\Delta$ & $4k^2-2k$\\
		\hline
		$I_2(2k+1)$ & $2$ & $2k+1$ & $\Delta^2$ & $4k+2$\\
		\hline
		$I_2(2k)$ & $2$ & $2k$ & $\Delta$ & $2k$\\
		\hline
	\end{tabular}
\end{center}

\begin{center}
Table \ref{numerical}.
\end{center}

\begin{remark} Observe that irrespective of whether $n$ is odd or even, the Coxeter numbers corresponding to $D_n$ and $I_2(n)$ are $2n-2$ and $n$ respectively. 
\end{remark}

The following lemma is a straightforward consequence of Theorem \ref{delta-roots}. 

\begin{lemma} \label{center-root}
Let $G$ be an irreducible Artin group of finite type. Let $c_{G}$ denotes the generator of the center of $G$. There is a word containing all the generators of $G$ which is equal to $c_{G}$ in $G$. Furthermore,
\begin{itemize}
	\item[(1)] $c_{A_{n}}$ admits an ${(n+1)}^{\mathrm{th}}$ root in $A_{n}$.
	\item[(2)] $c_{B_{n}}$ admits an $n^{\mathrm{th}}$ root in $B_{n}$.
	\item[(3)] If $n$ is odd, $c_{D_{n}}$ admits an ${(2n-2)}^{\mathrm{th}}$ root in $D_{n}$.
	\item[(4)] If $n$ is even, $c_{D_{n}}$ admits an ${(n-1)}^{\mathrm{th}}$ root in $D_{n}$.
	\item[(5)] If $n$ is odd, $c_{I_{2}(n)}$ admits an $n^{\mathrm{th}}$ root in $I_{2}(n)$.
	\item[(6)] If $n$ is even, $c_{I_{2}(n)}$ admits an ${(\frac {n} {2})}^{\mathrm{th}}$ root in $I_{2}(n)$.
	\end{itemize}
 \end{lemma}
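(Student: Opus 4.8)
The plan is to exhibit the required roots explicitly by identifying $c_{G}$ with a positive power of the element $\mathcal{J}=\mathcal{J}_1\mathcal{J}_2$. The key observation is that $\mathcal{J}$, being the product over the partition $I=J_1\sqcup J_2$ of the generating set, is a word in which every generator of $G$ appears. Hence any positive power $\mathcal{J}^{r}$ is also represented by a word containing all the generators; since the argument below will write $c_{G}$ as exactly such a power, this simultaneously proves the first assertion of the lemma for every irreducible Artin group of finite type.

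I would then split according to which power of $\Delta$ generates the center, as recorded in the theorem preceding Table \ref{numerical}. In the cases where $c_{G}=\Delta^{2}$ (namely $A_{n}$, $D_{2n+1}$ and $I_{2}(2n+1)$), the first part of Theorem \ref{delta-roots} gives $\Delta^{2}=\mathcal{J}^{h}$ directly, so $c_{G}=\mathcal{J}^{h}$ and $\mathcal{J}$ is an $h$-th root of $c_{G}$. In the cases where $c_{G}=\Delta$ (namely $B_{n}$, $D_{2n}$ and $I_{2}(2n)$), the element $\Delta$ is central, so the second part of Theorem \ref{delta-roots} applies to give $\Delta=\mathcal{J}^{h/2}$; thus $c_{G}=\mathcal{J}^{h/2}$ and $\mathcal{J}$ is an $(h/2)$-th root of $c_{G}$.

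It remains to substitute the Coxeter numbers from Table \ref{numerical} to recover the six stated root orders: $h=n+1$ for $A_{n}$; $h/2=n$ for $B_{n}$; $h=2n-2$ for $D_{n}$ with $n$ odd; $h/2=n-1$ for $D_{n}$ with $n$ even; $h=n$ for $I_{2}(n)$ with $n$ odd; and $h/2=n/2$ for $I_{2}(n)$ with $n$ even. I do not anticipate a genuine obstacle, since the content is essentially Theorem \ref{delta-roots} combined with the bookkeeping of the center-generator and the Coxeter number in each family. The only point requiring care is invoking the remark that $h$ is necessarily even whenever $\Delta$ is central, which guarantees that the exponent $h/2$ occurring in the $c_{G}=\Delta$ cases is an integer, so that the claimed root orders are well-defined.
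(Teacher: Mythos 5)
Your proposal is correct and follows essentially the same route as the paper, which dispatches the lemma as a straightforward consequence of Theorem \ref{delta-roots}: express $c_{G}$ as $\mathcal{J}^{h}$ or $\mathcal{J}^{h/2}$ according to whether the center is generated by $\Delta^{2}$ or $\Delta$, and read off the Coxeter number $h$ from Table \ref{numerical}. The two points you single out --- that $\mathcal{J}$ is a word containing every generator (which yields the first assertion), and that $h$ is even whenever $\Delta$ is central so the exponent $h/2$ is an integer --- are precisely the bookkeeping the paper leaves implicit.
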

 
\section{Key lemmas}
This section contains  results which we will later use in the proof of Theorem \ref{main theorem}. Section \ref{art-mon} includes material about the shape of words in Artin monoids. Section \ref{art-rot} contains the technical heart of this paper: Lemma \ref{main lemma}.

\subsection{Artin Monoids}\label{art-mon}

Let $G$ be an irreducible Artin group of finite type. Denote the monoid of positive words (with respect to the standard presentation) in $G$ by $G^{+}$. We call this monoid the Artin monoid.

For the benefit of the reader (and the authors!), we briefly recall the notion of a monoid presentation. Consider the presentation $P = \langle g_{1},\ldots, g_{n}\ \vert \ r_{1}=r'_{1},\ldots, r_{m}=r'_{m} \rangle$ where $r_{i}$ and $r'_{i}$ are positive words in the $g_{j}$. Consider the free monoid on the letters $g_{j}$; denote this by $F$. The monoid associated to the presentation $P$ is the quotient of $F$ by the smallest equivalence relation containing the relation $\{(xr_{i}y, xr'_{i}y)\}_{x,y\in F, i\in \{1,m\}}$; the set of equivalence classes clearly carries a natural monoid structure. 

\begin{proposition}\label{contgen}
Let $G$ be an irreducible Artin group of finite type. Let $w$ and $w'$ be positive words in the generators of $G$. Suppose $w=w'$ in $G^{+}$. If a generator $x_{i}$ appears in $w$, then it must appear in $w'$ as well. 
\end{proposition}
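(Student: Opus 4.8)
The plan is to exploit the fact that both sides of every defining relation of the Artin monoid involve exactly the same two generators, so that the set of generators occurring in a positive word is invariant under the relations. Concretely, for a positive word $u$ in the free monoid on $I=\{x_1,\dots,x_n\}$ I would write $S(u)\subseteq I$ for the set of generators that occur in $u$. The proposition then follows from the stronger claim that $S(w)=S(w')$ whenever $w=w'$ in $G^{+}$.

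First I would recall, using the description of a monoid presentation given just above the statement, that $w=w'$ in $G^{+}$ means precisely that there is a finite chain of positive words $w=u_0,u_1,\dots,u_k=w'$ in which each $u_{\ell+1}$ is obtained from $u_\ell$ by a single elementary move: one rewrites a subword of the form $\langle x_i,x_j\rangle^{m_{i,j}}$ as $\langle x_j,x_i\rangle^{m_{j,i}}$, or conversely, leaving the prefix and suffix of the word untouched. (Since the defining pairs already incorporate arbitrary contexts, the smallest equivalence relation containing them is automatically a congruence, so this chain description is legitimate.) Thus it suffices to check that $S$ is preserved by a single elementary move.

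The key observation is that for every defining relation $\langle x_i,x_j\rangle^{m_{i,j}}=\langle x_j,x_i\rangle^{m_{j,i}}$ with $m_{i,j}=m_{j,i}$ finite (the only relations that contribute nontrivially, since the alternating product of infinite length is the empty word by convention), both sides are alternating products in $x_i$ and $x_j$ of length $m_{i,j}\ge 2$; hence the set of generators occurring on either side is exactly $\{x_i,x_j\}$. Consequently, if $u_{\ell+1}=a\,\langle x_j,x_i\rangle^{m_{j,i}}\,b$ is obtained from $u_\ell=a\,\langle x_i,x_j\rangle^{m_{i,j}}\,b$, then $S(u_{\ell+1})=S(a)\cup\{x_i,x_j\}\cup S(b)=S(u_\ell)$. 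An immediate induction along the chain gives $S(w)=S(w')$, and in particular every generator appearing in $w$ appears in $w'$, which is what we want.

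I do not anticipate a serious obstacle here: the argument is essentially bookkeeping, and the only point requiring care is to state correctly that equality in the monoid is generated by these elementary moves and that each move genuinely fixes the remainder of the word. The one edge case worth a sentence is the treatment of pairs with $m_{i,j}=\infty$, for which there is no nontrivial relation; this does not arise for the four infinite families under consideration, and in any case such ``relations'' reduce to the trivial identity and can be ignored. It is worth noting that the argument in fact yields the symmetric equality $S(w)=S(w')$, a strengthening of the stated one-directional conclusion.
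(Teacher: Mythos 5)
Your combinatorial core --- that each defining relation $\langle x_i,x_j\rangle^{m_{i,j}}=\langle x_j,x_i\rangle^{m_{j,i}}$ involves exactly the generators $x_i,x_j$ on both sides, so that the set $S(u)$ of occurring generators is invariant under elementary moves --- is exactly the second half of the paper's proof, and that part is fine. The gap is in your first step: the claim that $w=w'$ in $G^{+}$ \emph{means} that $w$ and $w'$ are joined by a chain of positive elementary moves. In this paper $G^{+}$ is defined as the submonoid of the \emph{group} $G$ consisting of elements represented by positive words, so $w=w'$ in $G^{+}$ means $w=w'$ in $G$; a derivation witnessing equality in $G$ may insert and cancel inverses and need not stay positive. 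The chain description you invoke is the definition of equality in the \emph{abstractly presented} monoid, and the assertion that the natural surjection from that abstract monoid onto $G^{+}\subseteq G$ is injective is a genuine theorem (Brieskorn--Saito, \cite[Proposition 5.5]{Artin-Cox}), which is precisely the first --- and only nontrivial --- step of the paper's proof. This identification cannot be treated as bookkeeping, because for a general positive presentation it is false: in the monoid $\langle a,b \mid a=aba\rangle$ the word $ab$ and the empty word are distinct (no elementary move applies to the empty word at all), yet they are equal in the group with the same presentation, since $a=aba$ forces $ab=1$ there.

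Note also that the group-level reading is the one actually needed downstream: in Lemma \ref{main lemma} the proposition is applied to equalities such as $x^{k}=\Delta_n^{2}$, which hold in $G$ and not a priori in the abstract monoid. So to repair your proof you need only insert the appeal to \cite[Proposition 5.5]{Artin-Cox} (or an equivalent embedding theorem for Artin monoids of finite type) before invoking the chain-of-moves description of equality; everything after that point is correct, and your observation that one in fact gets the symmetric equality $S(w)=S(w')$ is a harmless strengthening.
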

\begin{proof}
By \cite[Proposition 5.5]{Artin-Cox}, the Artin monoid $G^{+}$ is isomorphic to the \textit{monoid} presented by the Artin presentation associated to $G$ via the obvious isomorphism. 	

If $x_{i}$ appears as a letter on one side of any of the relations defining $G^{+}$, it appears on the other side as well. From this, and the definition of a monoid presentation, the result follows. 
\end{proof}

The following lemma is a direct consequence of the above proposition and the definition of $\Delta$. 
\begin{lemma}\label{center-contgen} 
Let $G$ be an irreducible Artin group of finite type. Every generator of $G$ appears in every positive word representing $\Delta$ or $\Delta^2$.
\end{lemma}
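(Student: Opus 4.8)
The plan is to prove Lemma~\ref{center-contgen} by reducing the claim about $\Delta$ and $\Delta^2$ to the statement of Proposition~\ref{contgen}, using the explicit definition of $\Delta$ from~\eqref{Garsideelement}. First I would recall that $\Delta$ is defined as a specific positive word built out of $\mathcal{J}_1$ and $\mathcal{J}_2$, where $\mathcal{J}_1$ and $\mathcal{J}_2$ are products ranging over the two halves $J_1$ and $J_2$ of a partition of the full generating set $I=\{x_1,\ldots,x_n\}$. The key observation is that $\mathcal{J}=\mathcal{J}_1\mathcal{J}_2$ is a positive word in which \emph{every} generator appears: since $J_1\cup J_2 = I$, each $x_i$ lies in exactly one of the two blocks and therefore occurs in $\mathcal{J}$.

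Next I would exhibit a single fixed positive word representing $\Delta$ in which every generator appears. In the even Coxeter number case, $\Delta = \mathcal{J}^{h/2}$, and since $h/2\geq 1$ this word contains a full copy of $\mathcal{J}$ and hence every generator; in the odd case $\Delta = \mathcal{J}^{(h-1)/2}\mathcal{J}_1$, and as long as $(h-1)/2 \geq 1$ the same conclusion holds (for the degenerate small-$n$ exceptions one checks directly that $\mathcal{J}_1$ or the product still exhausts the generators, but for irreducible finite-type diagrams $h$ is large enough that this is automatic). Consequently $\Delta^2$, being the square of such a word, also admits a positive representative containing every generator. So there is at least one positive word for $\Delta$ (and for $\Delta^2$) using all generators.

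The crucial step is then to upgrade ``some positive representative uses all generators'' to ``every positive representative uses all generators.'' This is exactly where Proposition~\ref{contgen} does the work: if $w$ is any positive word equal to $\Delta$ in $G^+$ and $w_0$ is the distinguished representative from the previous paragraph, then $w = w_0$ in $G^+$, and since each generator $x_i$ appears in $w_0$, Proposition~\ref{contgen} forces $x_i$ to appear in $w$ as well. The identical argument applied to $\Delta^2$ completes the proof. One small subtlety I would be careful about is the passage between equality in the group $G$ and equality in the monoid $G^+$: Proposition~\ref{contgen} is stated for words equal in $G^+$, and a positive word ``representing $\Delta$'' must be interpreted as one equal to the distinguished positive representative in $G^+$, not merely in $G$; since $\Delta$ is itself a positive element and $G^+$ embeds in $G$, this is harmless, but it is the only place the argument requires a moment's attention.

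The main obstacle, such as it is, is essentially bookkeeping rather than mathematics: confirming that the exponents in~\eqref{Garsideelement} are large enough that the distinguished word genuinely contains at least one complete copy of $\mathcal{J}$ (equivalently, of both $\mathcal{J}_1$ and $\mathcal{J}_2$), so that no generator is accidentally omitted in the small-rank cases. Once that is verified, the lemma follows immediately from Proposition~\ref{contgen}, which is why the excerpt rightly calls it ``a direct consequence.''
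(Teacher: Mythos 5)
Your proposal is correct and is precisely the argument the paper intends: the paper dispatches this lemma as ``a direct consequence'' of Proposition~\ref{contgen} and the definition~\eqref{Garsideelement}, which is exactly your reduction --- the defining word for $\Delta$ contains every generator because $J_1\sqcup J_2=I$, and Proposition~\ref{contgen} then transfers this to every positive representative of $\Delta$ and $\Delta^2$. Your extra care about equality in $G$ versus $G^+$ and about the exponents being at least $1$ is sound bookkeeping that the paper leaves implicit.
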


We also have the following lemma about the appearance of generators in powers of words.

\begin{lemma}\label{power-contgen}  
 Let $G$ be an irreducible Artin group of finite type. Let $x \in {G}^{+}$. If for some $n \in \mathbb{N}$ a generator $x_i$ appears in $x^n$, then it also appears in $x$.
\end{lemma}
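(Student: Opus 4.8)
The plan is to leverage Proposition \ref{contgen}, which tells us that whether or not a given generator appears is a well-defined property of an \emph{element} of the Artin monoid $G^{+}$, independent of the positive word chosen to represent it. Indeed, if $u$ and $u'$ are two positive words representing the same element of $G^{+}$, then by Proposition \ref{contgen} a generator appears in $u$ if and only if it appears in $u'$. Thus it makes sense to speak of a generator ``appearing in'' an element of $G^{+}$, and this is the interpretation under which the statement should be read.

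With this observation in hand, I would first fix a positive word $w$ in the generators of $G$ representing $x$. The $n$-fold concatenation $w^{n} = ww\cdots w$ is then a positive word representing $x^{n}$ in $G^{+}$. Now suppose the generator $x_{i}$ appears in $x^{n}$. Since $w^{n}$ is a positive word representing $x^{n}$, the well-definedness established above (a direct application of Proposition \ref{contgen}) shows that $x_{i}$ must appear in the word $w^{n}$. But $w^{n}$ is literally $n$ copies of $w$ placed side by side, so a generator occurs in $w^{n}$ precisely when it occurs in $w$. Hence $x_{i}$ appears in $w$, and therefore in $x$, as desired.

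There is essentially no hard step here: the entire content lies in recognizing that Proposition \ref{contgen} promotes ``appears in'' from a property of words to a property of monoid elements, after which the claim reduces to the trivial combinatorial fact that the set of letters occurring in $w^{n}$ equals the set of letters occurring in $w$. The only point requiring a modicum of care is to confirm that $x^{n}$ is genuinely represented by the concatenation $w^{n}$ in the monoid $G^{+}$, which is immediate from the definition of the monoid multiplication as concatenation of representing words.
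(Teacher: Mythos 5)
Your proof is correct and is essentially the same argument as the paper's: both reduce the claim to the observation that the concatenation $w^{n}$ of a positive word $w$ representing $x$ is a positive word representing $x^{n}$ with the same set of letters as $w$, and then invoke Proposition \ref{contgen} to transfer the appearance of $x_{i}$ between representatives. The paper phrases this as a two-line contrapositive, while you spell out the well-definedness of ``appears in'' as a property of monoid elements, but the mathematical content is identical.
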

\begin{proof}
Assume that a generator $x_i$ does not appear in a word representing $x$. Then, it does not appear in a word representing $x^k$. This contradicts Proposition \ref{contgen}.
\end{proof}

\subsection{Roots in Artin groups of finite type}\label{art-rot}

Artin groups of finite type are, in particular, examples of Garside groups. As there are many good references for theory of Garside groups we will not define this notion here, instead directing the reader towards \cite{Deh-P} (where these objects were first introduced) for a comprehensive overview. Briefly, however: a Garside group is a group that can be realized as the group of fractions of a Garside monoid. Garside monoids, in turn, are a class of cancellative monoids with good divisibility properties (\cite[Section 2]{Deh-P}). 

The fact that Artin groups of finite type are examples of Garside groups was essentially proved by Brieskorn and Saito in \cite{Artin-Cox} (also see \cite[Example 1, Section 2]{Deh-P}). More precisely, Brieskorn and Saito prove the following: suppose $C$ is a Coxeter matrix such that $\bar{G}_{C}$ is finite. Then, the Artin monoid $G_{C}^{+}$ is a Garside monoid with group of fractions $G_{C}$. 

In this paper, we are concerned with root extraction in Artin groups of finite type. An algorithm for extracting roots in Garside groups was given by Siebert in \cite[Algorithm 2.12]{sibgar}. It lies at the core of our proof of the following proposition. 

\begin{proposition}\label{positive root}
Let $G$ be an irreducible Artin group of finite type. Suppose $a\in \mathrm{Z}(G)\cap G^{+}$, and $n\in \mathbb{N}$. Then, the equation $x^{n}=a$ has a solution in $G$ if and only if it has a solution in $G^{+}$. 
\end{proposition}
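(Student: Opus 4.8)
The plan is to establish the non-trivial implication; the reverse one is immediate, since $G^{+}\subseteq G$ and so any solution of $x^{n}=a$ in $G^{+}$ is in particular a solution in $G$. For the forward direction, suppose $x^{n}=a$ holds in $G$ with $a\in\mathrm{Z}(G)\cap G^{+}$. First I would exploit how restrictive the hypothesis on $a$ is: since $\mathrm{Z}(G)$ is infinite cyclic generated by a power of $\Delta$, we may write $a=\Delta^{M}$ for some integer $M$. Applying the exponent-sum homomorphism $e\colon G\to\mathbb{Z}$ sending every generator to $1$ (well defined because each defining relation $\langle x_{i},x_{j}\rangle^{m_{i,j}}=\langle x_{j},x_{i}\rangle^{m_{j,i}}$ has equal length on both sides) and using $e(a)\geq 0$ together with $e(\Delta)>0$, we conclude $M\geq 0$. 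If $M=0$ then $a=1$, and torsion-freeness of $G$ forces $x=1\in G^{+}$; so we may assume $M>0$.

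The key structural observation is that, because $a$ is central, the set of $n$-th roots of $a$ is closed under conjugation: if $x^{n}=a$ then $(gxg^{-1})^{n}=gx^{n}g^{-1}=gag^{-1}=a$ for every $g\in G$. It therefore suffices to produce a single conjugate of $x$ lying in $G^{+}$, i.e. to show that the summit infimum $\inf_{s}(x)=\max_{g}\inf(gxg^{-1})$ is non-negative; indeed an element is conjugate into $G^{+}$ exactly when its summit infimum is non-negative, and the iterated cycling of Garside theory realises the optimal conjugate. This is where Siebert's root-extraction machinery enters. The element $x$ is \emph{periodic}, since $x^{n}=\Delta^{M}$ is a power of the Garside element, so its two translation numbers coincide; computing along the subsequence of $n$-th powers, where $\inf(\Delta^{Mk})=\sup(\Delta^{Mk})=Mk$, gives $t_{\inf}(x)=t_{\sup}(x)=M/n\geq 0$. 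The structure theory of periodic elements in a Garside group (underlying Siebert's Algorithm~2.12) then yields a conjugate $\hat{x}$ of $x$ with $\inf(\hat{x})=\lfloor M/n\rfloor\geq 0$, so that $\hat{x}\in G^{+}$; by the conjugation-closure above $\hat{x}^{n}=a$, and $\hat{x}$ is the desired positive root.

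I expect the penultimate step to be the main obstacle: showing that a periodic element with non-negative translation number is conjugate to a positive element. The elementary super-additivity $\inf(x^{n})\geq n\,\inf(x)$ only bounds $\inf(x)$ from above, so no naive estimate on a single representative can force positivity; one genuinely has to pass to the super summit set and use the rigidity of periodic elements (equivalently, verify that running Siebert's algorithm on a target in $G^{+}$ returns a positive root). By contrast, the remaining ingredients — identifying $a$ as a non-negative power of $\Delta$ via the exponent-sum map, and the conjugation-closure of the root set — are routine, so essentially the whole weight of the proposition rests on this Garside-theoretic input about roots of powers of $\Delta$.
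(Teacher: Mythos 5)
Your proposal is correct and is in essence the paper's own argument: the paper likewise delegates the crux---that a root of a positive central element can be conjugated (or replaced) so as to lie in $G^{+}$---to Siebert's root-extraction machinery for Garside groups \cite{sibgar}, treating exactly the reductions you spell out (writing $a=\Delta^{M}$ with $M\geq 0$, and the conjugation-closure of the set of $n$-th roots of a central element) as the ``substantial simplification'' it mentions but does not write down. The only ingredient the paper supplies that your write-up omits is verification of the hypothesis under which Siebert's Algorithm 2.12 applies, namely that the Artin monoid has finite positive conjugacy classes, which it obtains from \cite{sibtame}; if you ground the key step in Siebert's algorithm (rather than in the Lee--Lee translation-number theorem $\inf_{s}(x)=\lfloor t_{\inf}(x)\rfloor$, which also suffices and needs only a finite Garside structure), you should include that tameness check.
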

\begin{proof}
This follows from the root extraction algorithm for Garside groups - see \cite[Algorithm 2.12]{sibgar}. Understanding how this algorithm works in our particular context is particularly easy because of our hypotheses on $a$: the fact that $a\in G^{+}$ and $a\in \mathrm{Z}(G)$ leads to a substantial simplification. 

However, there is one point that we wish to elaborate. The hypothesis in \cite[Algorithm 2.12]{sibgar} requires the Garside group $G$ to be the group of fractions of a Garside monoid $M$ with finite positive conjugacy classes. This hypothesis is satisfied in our case, by \cite [Corollary 2.4]{sibtame} and \cite[Criterion B]{sibtame}. 
\end{proof}

\begin{lemma}\label{main lemma}
Let $G$ be an irreducible Artin group of finite type. Let $c_{G}$ denote the generator of the center of $G$. The following statements hold:
\begin{enumerate}
    \item[(1)] For $k> n+1$, $c_{A_n}$ does not have a $k^{\mathrm{th}}$ root in $A_n$. 
	\item[(2)] For $k> n$, $c_{B_n}$ does not have a $k^{\mathrm{th}}$ root in $B_n$.
	\item[(3)] For $D_n$, the following holds:
		\begin{enumerate}
		\item[(a)] If $n$ is even and $k> n-1$, $c_{D_n}$ does not have a $k^{\mathrm{th}}$ root in $D_n$.
		\item[(b)] If $n$ is odd and $k> 2n-2$, $c_{D_n}$ does not have a $k^{\mathrm{th}}$ root in $D_n$.
		\end{enumerate}
	\item[(4)] For $I_2(n)$, the following holds:
		\begin{enumerate}
		\item[(a)] If $n$ is even and $k > {\frac {n} {2}}$, $c_{I_{2}(n)}$ does not have a $k^{\mathrm{th}}$ root in $I_{2}(n)$. 
		\item[(b)] If $n$ is odd and $k>n$, $c_{I_2(n)}$ does not have $k^{\mathrm{th}}$ root in $I_2(n)$.
		\end{enumerate}
\end{enumerate}
\end{lemma}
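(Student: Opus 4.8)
The plan is to reduce everything to a length count inside the Artin monoid $G^{+}$. The key observation is that word length is a monoid invariant: each defining relation $\langle x_i, x_j\rangle^{m_{i,j}} = \langle x_j, x_i\rangle^{m_{j,i}}$ equates two positive words of the \emph{same} length $m_{i,j}=m_{j,i}$, so the total number of letters is unchanged by any application of a relation. Hence word length descends to a well-defined additive function $\ell\colon G^{+}\to\mathbb{N}$ with $\ell(uv)=\ell(u)+\ell(v)$, and in particular $\ell(w^{k})=k\,\ell(w)$ for every $w\in G^{+}$.

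Next I would move the problem from $G$ into $G^{+}$. Since $c_{G}$ equals $\Delta$ or $\Delta^{2}$, it is a positive word lying in $\mathrm{Z}(G)\cap G^{+}$, so Proposition \ref{positive root} applies: if $c_{G}$ admits a $k^{\mathrm{th}}$ root in $G$, it admits one in $G^{+}$. Fixing such an $x\in G^{+}$ with $x^{k}=c_{G}$ and applying $\ell$ yields $k\,\ell(x)=\ell(c_{G})$, so that $k\mid\ell(c_{G})$ and $\ell(x)=\ell(c_{G})/k$.

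The decisive constraint is that \emph{every} generator must appear in $x$. Indeed, $x^{k}$ is a positive word representing $\Delta$ or $\Delta^{2}$, so by Lemma \ref{center-contgen} all $n=\operatorname{rank}(G)$ generators appear in $x^{k}$; by Lemma \ref{power-contgen} each such generator then already appears in $x$. Therefore $\ell(x)\geq n$, and combining this with the length identity gives the clean bound $k\leq \ell(c_{G})/n$.

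It then remains to substitute the values of $\ell(c_{G})$ from Table \ref{numerical}, re-expressed in terms of $n$: for example $\ell(c_{A_n})=n(n+1)$, $\ell(c_{B_n})=n^{2}$, $\ell(c_{D_n})=2n(n-1)$ for $n$ odd and $n(n-1)$ for $n$ even, and $\ell(c_{I_2(n)})=2n$ for $n$ odd and $n$ for $n$ even. Dividing each by $n$ (respectively $2$ for $I_2(n)$) reproduces exactly the thresholds $n+1$, $n$, $2n-2$, $n-1$, $n$, $n/2$ claimed in the six cases, so $k$ exceeding any of these forces $\ell(x)<n$, a contradiction. I do not expect a genuine obstacle here: the entire conceptual weight sits in recognizing that the length constraint and the \textquotedblleft all generators appear\textquotedblright\ constraint together pin down the maximal root order, after which each case is a one-line arithmetic verification. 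The only point demanding care is the bookkeeping that translates the tabulated word lengths into bounds on $k$.
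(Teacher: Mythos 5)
Your proposal is correct and follows essentially the same route as the paper: reduce to the Artin monoid via Proposition \ref{positive root}, apply the length homomorphism $\lambda$ (your $\ell$) to get $k\,\ell(x)=\ell(c_G)$, and use Lemma \ref{center-contgen} together with Lemma \ref{power-contgen} to force $\ell(x)\geq\operatorname{rank}(G)$, contradicting the length bound. The only difference is cosmetic --- you package the six cases into the single uniform inequality $k\leq \ell(c_G)/\operatorname{rank}(G)$ before substituting the table values, whereas the paper runs the identical argument case by case.
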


\begin{proof}
Every Artin group $G$ admits a group homomorphism $\lambda: G\to \mathbb{Z}$ defined by sending each generator in the standard presentation of $G$ to $1$; a moments thought will convince the reader that this map is well-defined. $\lambda$ restricts to a monoid homomorphism $\lambda: G^{+}\to \mathbb{N}$. We will call the restriction of $\lambda$ to $G^{+}$ the \textit{word length} function on $G^{+}$.
	
 We will use $\Delta_{n}$ for the element from \ref{Garsideelement} in the groups $A_{n}, B_{n}, D_{n},$ and $I_{2}(n)$: the group we are working in should be clear from the context.  
	
	(1): See Table \ref{numerical}: the center of $A_n$ is generated by $\Delta_n^2$. Suppose $\Delta_{n}^{2}$ has a $k^{\mathrm{th}}$ root for $k> n+1$.  By Proposition \ref{positive root}, there therefore exists $x\in A_n^+$ such that $x^{k}=\Delta_n^2$. Evaluating both sides by $\lambda$, we have $$k\lambda(x)=\lambda(\Delta_n^2)=n(n+1).$$ As $k>n+1$, this implies that $\lambda(x)<n$. 
	
	We know from Lemma \ref{center-contgen} that each $x_i$ appears in $\Delta_n^2$. Since $x^k = \Delta_{n}^2$,  as a consequence of Lemma \ref{contgen} and Lemma \ref{power-contgen} each $x_i$ must also appear in any word representing $x$. Since $x$ is a positive word in which each and every generator appears, $\lambda(x)\geq n$, a contradiction.  
	
	(2): See Table \ref{numerical}: the center of $B_n$ is generated by $\Delta_n$. Suppose $\Delta_{n}$ has a $k^{\mathrm{th}}$ root for $k>n$. Again by Proposition \ref{positive root}, there exists $x\in B_n^+$ such that $x^{k}=\Delta_n$. Evaluating both sides by $\lambda$, we see that $$k\lambda(x)=\lambda(\Delta_n)=n^2.$$ As $k>n$, we have $\lambda(x)<n$. But $x$ is a root of $\Delta_n$. Arguing as in the previous case, Lemma \ref{contgen}, Lemma \ref{center-contgen} and Lemma \ref{power-contgen} tell us that $\lambda(x)\geq n$. This is a contradiction.
	
	(3): In this case we need to be slightly more careful. From Table \ref{numerical}, we see that $\operatorname{Z}(D_{n})$ is generated by $\Delta_n$ when $n$ is even and by $\Delta_n^2$ when $n$ is odd. We also have 
    $$ 
    \begin{cases}
    \lambda(\Delta_n)=n(n-1) & \text{ if } n \text{ is even, }\\
    \lambda(\Delta_n^2)=n(2n-2) & \text{ if } n \text{ is odd. }
    \end{cases}
    $$ 	
	
	We consider the cases when $n$ is odd and even separately. 
	
	(a) {\textit{${\mathit{n}}$ is even}}: From Lemma \ref{positive root}, if there exists an $k^{\mathrm{th}}$ root for $c_{D_{n}}$ in $D_n$ there is one in ${D_n}^{+}$. Assume that $x^{k} = \Delta_n$, for some $k> n-1$ and $x\in {D_n}^{+}$. Analogous to earlier cases, we have the following equality:
	$$k\lambda(x)=\lambda(\Delta_n)=n(n-1).$$
     It thus follows that $\lambda(x) < n$. By arguments similar to what we have done above, we derive a contradiction.    
	 
	 (b) {\textit{${\mathit{n}}$ is odd}}: Let $k> 2n-2$.  As before, we assume the existence of a $k^{\mathrm{th}}$ root for the generator of the center, and thus the existence of a root in the positive monoid $D_{n}^{+}$, which we denote by $x$. As earlier we have the following equality
	 $$k\lambda (x)=\lambda (\Delta_n^2)=n(2n-2).$$
	  Thus it follows that $\lambda(x) < n$. This is a contradiction.
	 	 
	(4):  Gaze once more upon Table \ref{numerical}: $\operatorname{Z}(I_{2}(n))$ is generated by $\Delta_n$ when $n$ is even and by $\Delta_n^2$ when $n$ is odd. We also have 
	$$ 
	\begin{cases}
	\lambda(\Delta_n)=n & \text{ if } n \text{ is even, }\\
	\lambda(\Delta_n^2)=2n & \text{ if } n \text{ is odd. }
	\end{cases}
	$$ 	
	As in the previous case, we divide it into cases where $n$ is even and odd:
	 
	 (a) {\textit{${\mathit{n}}$ is even}}: Let $k> {\frac {n} {2}}$.
	 
	  Suppose there is a $k^{\mathrm{th}}$ root, and thus a positive $k^{\mathrm{th}}$ root, for the generator of the center: let this positive root be $x$.
	 $$k \lambda(x) = \lambda(\Delta_n) = n.$$
	 Since $k> {\frac {n} {2}}$, this implies that $\lambda(x) < 2$. As before, we derive a contradiction. 
 
	 (b) {\textit{${\mathit{n}}$ is odd}}: Given $k>n$, if we assume an $k^{\mathrm{th}}$ root for the generator of the center in $I_2(n)$, it follows that there exists a $k^{\mathrm{th}}$ root in $I_2(n)^+$ from Proposition \ref{positive root}: call this positive root $x$. By similar arguments as before, we obtain the following equality:
	 $$k\lambda(x) = \lambda(\Delta_n^2).$$ 
	  It thus follows that $\lambda(x) < 2$.  This is a contradiction.
\end{proof}

\section{Elementary Equivalence}\label{elem}

\begin{theorem}\label{main theorem}
The elementary theories of irreducible Artin groups of finite type can be characterized as follows:
	\begin{itemize}
		\item[(1)]  $\operatorname{Th}(A_n) = \operatorname{Th}(A_m)$ if and only if $n = m$. 
		\item[(2)] $\operatorname{Th}(B_n) = \operatorname{Th}(B_m)$ if and only if $n = m$.
		\item[(3)] $\operatorname{Th}(D_n) = \operatorname{Th}(D_m)$ if and only if $n = m$.
		\item[(4)] For the family $I_{2}(n)$, the following holds:
		\begin{itemize}
		\item [(a)] If $m,n$ are odd, $\operatorname{Th}(I_2(n)) = \operatorname{Th}(I_2(m))$ if and only if $m=n$.
		\item [(b)]  If $m,n$ are even, $\operatorname{Th}(I_2(n)) = \operatorname{Th}(I_2(m))$ if and only if $m=n$. 
		\item [(c)] If $n$ is even, then for any $m>n$, $\mathrm{Th}(I_2(m)) \neq \mathrm{Th}(I_2(n))$.
		
\end{itemize}		
\end{itemize}
Each of the above results continues to hold in the $(\forall \exists \forall)^{1}$ fragment of the elementary theory.
\end{theorem}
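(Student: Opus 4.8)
The plan is to distinguish all the theories in question using only the sentences $\Phi_{N}$ from Section \ref{two}, each of which already lies in the $(\forall\exists\forall)^{1}$ (Kahr) class; this makes the final clause of the theorem automatic, since every sentence I use lives in that fragment by construction. The first step I would take is to reduce the content of $\Phi_{N}$ to a statement purely about the central generator. Recall that $\Phi_{N}\in\operatorname{Th}(G)$ precisely when every central element of $G$ admits an $N^{\mathrm{th}}$ root. Since $\operatorname{Z}(G)$ is infinite cyclic, generated by $c_{G}$, I would observe that this happens if and only if $c_{G}$ itself admits an $N^{\mathrm{th}}$ root: the forward implication is immediate because $c_{G}$ is central, and for the converse, if $y^{N}=c_{G}$ then $(y^{m})^{N}=c_{G}^{m}$ exhibits an $N^{\mathrm{th}}$ root for the arbitrary central element $c_{G}^{m}$. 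Thus, for every group in play, $\Phi_{N}\in\operatorname{Th}(G)$ if and only if $c_{G}$ has an $N^{\mathrm{th}}$ root in $G$.

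Next I would pin down, for each family, the largest $N$ for which $c_{G}$ has an $N^{\mathrm{th}}$ root; call this $M(G)$. Lemma \ref{center-root} supplies a root of the relevant order in each case (an $(n+1)^{\mathrm{th}}$ root for $A_{n}$, an $n^{\mathrm{th}}$ root for $B_{n}$, and so on), while Lemma \ref{main lemma} forbids roots of any strictly larger order. Combining the two yields the exact values
\[
M(A_{n})=n+1,\quad M(B_{n})=n,\quad
M(D_{n})=\begin{cases}2n-2 & n\text{ odd}\\ n-1 & n\text{ even}\end{cases},\quad
M(I_{2}(n))=\begin{cases}n & n\text{ odd}\\ n/2 & n\text{ even.}\end{cases}
\]
The upshot, by the first step, is a clean dichotomy: $\Phi_{M(G)}\in\operatorname{Th}(G)$, whereas $\Phi_{k}\notin\operatorname{Th}(G)$ for every $k>M(G)$.

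The third step turns this into the separation statements. The general principle is that if two groups $G,G'$ in the same family satisfy $M(G)<M(G')$, then $\Phi_{M(G')}$ belongs to $\operatorname{Th}(G')$ but not to $\operatorname{Th}(G)$, so the theories differ. Hence it suffices to analyse the injectivity of $M$. For $A_{n}$ and $B_{n}$ the function $M$ is strictly increasing in $n$, settling (1) and (2). For $D_{n}$, the odd-index values $2n-2$ are all even while the even-index values $n-1$ are all odd, so the two ranges are disjoint and $M$ is strictly monotone on each; hence $M$ is injective on the whole family and any $n\neq m$ is separated, giving (3). For $I_{2}(n)$ the same monotonicity gives injectivity among odd indices (4a) and among even indices (4b); and for (4c), if $n$ is even and $m>n$ then $M(I_{2}(m))$ equals either $m$ or $m/2$, and in both cases exceeds $n/2=M(I_{2}(n))$, so $\Phi_{M(I_{2}(m))}$ separates the two. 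In every case the separating sentence has the form $\Phi_{N}$, which establishes the refinement to the $(\forall\exists\forall)^{1}$ fragment at no extra cost.

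The main obstacle, and the reason the limitation sits precisely in the family $I_{2}(n)$, is that $\Phi_{N}$ can only ever detect the \emph{maximal} root order $M(G)$, so the method is structurally blind whenever two groups share that value. This is exactly what occurs for $I_{2}(n)$ across parities: for instance $M(I_{2}(6))=3=M(I_{2}(3))$, so no sentence $\Phi_{N}$ can separate $I_{2}(6)$ from $I_{2}(3)$. This is why only the partial statements (4a)--(4c) are attainable by this technique rather than a full biconditional, and it is where any strengthening would require a genuinely different invariant. I would emphasise that the truly load-bearing input is the non-existence half, Lemma \ref{main lemma}: producing roots (Lemma \ref{center-root}) is elementary, but ruling out roots of large order via the word-length homomorphism together with the positivity of central roots (Proposition \ref{positive root}) is what makes $M(G)$ well defined and the whole scheme go through.
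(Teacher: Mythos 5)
Your proposal is correct and takes essentially the same route as the paper: the same sentences $\Phi_N$, the same combination of Lemma \ref{center-root} (existence of roots) with Lemma \ref{main lemma} (non-existence of higher-order roots), and the same monotonicity/parity case analysis, with your explicit invariant $M(G)$ being just a cleaner packaging of the paper's ``largest integer $l$ such that $\Phi_l$ holds'' argument. Your closing remark correctly identifies why part (4) cannot be upgraded to a full biconditional by this method (e.g.\ $M(I_2(6))=M(I_2(3))=3$), which the paper leaves implicit.
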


\begin{proof}
Consider the following family of first order sentence, first introduced in Section 1:
	\[\Phi_k = \forall x. \exists y. \forall z.(\neg(xz = zy) \vee (x = y^k)).\] 
	$\Phi_k$ holds in a group precisely when every element in the center of the group has an $k^{\mathrm{th}}$ root; these are the sentences we will use to distinguish the elementary theories of the groups under consideration.
	
Whenever $m,n$ appear in this proof assume that $m>n$. 
	
	(1): The family $A_{k}$: Lemma \ref{center-root} implies that $\Delta_m^2$ has an $(m+1)^{\mathrm{th}}$ root in $A_{m}$. As $\operatorname{Z}(A_{m})$ is generated by $\Delta_m^2$, every element in $\operatorname{Z}(A_{m})$ has an $(m+1)^{\mathrm{th}}$ root. This implies that  $\Phi_{m+1}$ holds in $A_m$. As $m+1>n+1$, Lemma \ref{main lemma}(1) tells us that $\Delta_n^2$ does not admit an $(m+1)^{\mathrm{th}}$ root. This shows that $\Phi_{m+1}$ does not hold in $A_n$. Thus $\operatorname{Th}(A_n) \neq \operatorname{Th}(A_m)$.
	
	(2): The family $B_{k}$:  $\operatorname{Z}(B_m)$ is generated by $\Delta_m$. $\Delta_m$ has an $m^{\mathrm{th}}$ root in $B_m$ by Lemma \ref{center-root}, thus every element in $\operatorname{Z}(B_{m})$ has an $m^{\mathrm{th}}$ root. This implies that $\Phi_m$ holds in $B_m$. As $m>n$,  we see from Lemma \ref{main lemma}(2) that $\Delta_n$ does not admit an $m^{\mathrm{th}}$ root. Thus $\Phi_m$ does not hold true in $B_n$. 

	(3): The family $D_{k}$:  We subdivide the problem into following cases: 
	\begin{enumerate}
	\item[(i)]\emph{{\textit{Both $\mathit{m}$ and $ \mathit{n}$ are odd:}}}
	 As $\operatorname{Z}(D_m)$ is generated by $\Delta_m^{2}$, Lemma \ref{center-root} shows that  $\Phi_{2m-2}$ holds in $D_m$. As $m>n$, Lemma \ref{main lemma}(3(b)) shows that $\Phi_{2m-2}$ does not hold in $D_n$. 
	\item[(ii)]\emph{{\textit{Both $\mathit{m}$ and $\mathit{n}$ are even:}}}
     Here, $\operatorname{Z}(D_m)$ is generated by $\Delta_m$, by Lemma \ref{center-root}. Thus $\Phi_{m-1}$ holds in $D_m$. As $m>n$ it follows from Lemma \ref{main lemma}(3(a)) that $\Phi_{m-1}$ does not hold in $D_n$. 
    \item[(iii)] \emph{\textit One out of $\mathit{m}$ and $\mathit{n}$ is odd and the other is even:}
     Suppose first that $m$ is odd and $n$ is even. By examining parts 3(i) and 3(ii) of this proof, we note the following fact: the largest integer $l$ such that $\Phi_{l}$ holds in $D_{m}$ is even while the largest integer $j$ such that $\Phi_{j}$ holds in $D_{n}$ is odd. Thus $\operatorname{Th}(D_{m})\neq \operatorname{Th}(D_{n})$. 
     The same argument easily adapts to the case where $m$ is even and $n$ is odd. 
	\end{enumerate}
	
	(4): The family $I_{2}(n)$: As in the case of $D_n$, here we have sub-cases:
	\begin{itemize}
	\item[(a)]{{\textit{Both $\mathit{m}$ and $\mathit{n}$ are odd:}}} By Lemma \ref{center-root}, we know that $\Phi_n$ and $\Phi_m$ are hold in $I_2(n)$ and $I_2(m)$ respectively. From Lemma \ref{main lemma}(4(a)), it follows that $\Phi_m$ does not hold in $I_2(n)$.
	\item[(b)] {{\textit{Both $\mathit{m}$ and $\mathit{n}$ are even:}}}   By Lemma \ref{center-root}, we see that $\Phi_{\frac{n}{2}}$ and $\Phi_{\frac{m}{2}}$ are true in $I_2(n)$ and $I_2(m)$ respectively. From Lemma \ref{main lemma}(4(b)), it follows that $\Phi_{\frac{m}{2}}$ does not hold true in $I_2(n)$.
\item[(c)] If $n$ is even and $m > n$, then by parts 4(a) and 4(b) of this proof, we see that there is an integer $l > {\frac{n}{2}}$ such that $\Phi_{l}$ holds in $I_{2}(m)$. By an argument nearly identical to those used above, $\Phi_{l}$ does not hold in $I_{2}(n)$.
\end{itemize}
This completes the proof.
\end{proof}

\begin{corollary} Two braid groups have the same elementary theory if and only if they are isomorphic. The result continues to hold even if we only consider the $(\forall\exists\forall)^{1}$ fragment of the elementary theory.
\end{corollary}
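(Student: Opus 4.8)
The plan is to realize braid groups as Artin groups of type $A$ and then invoke Theorem \ref{main theorem}(1) directly. Recall that the braid group on $N$ strands is generated by $\sigma_1,\dots,\sigma_{N-1}$ subject to the relations $\sigma_i\sigma_{i+1}\sigma_i=\sigma_{i+1}\sigma_i\sigma_{i+1}$ and $\sigma_i\sigma_j=\sigma_j\sigma_i$ for $\lvert i-j\rvert\ge 2$; this is precisely the standard presentation associated to the Coxeter diagram $A_{N-1}$. Thus the braid group on $N$ strands is isomorphic to the Artin group $A_{N-1}$, and in particular is an irreducible Artin group of finite type to which our results apply.

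First I would dispose of the trivial direction: isomorphic groups satisfy exactly the same first-order sentences, so isomorphism implies elementary equivalence. For the converse, suppose the braid groups on $N$ and $N'$ strands are elementarily equivalent. Under the identification above this says $\operatorname{Th}(A_{N-1})=\operatorname{Th}(A_{N'-1})$, and Theorem \ref{main theorem}(1) then forces $N-1=N'-1$, i.e. $N=N'$. Since the number of strands determines the braid group up to isomorphism, the two braid groups are isomorphic, completing the nontrivial direction.

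For the statement about the $(\forall\exists\forall)^1$ fragment, I would observe that the sentences $\Phi_k$ used in the proof of Theorem \ref{main theorem}(1) to separate $\operatorname{Th}(A_{N-1})$ from $\operatorname{Th}(A_{N'-1})$ are already of the form $\forall x\,\exists y\,\forall z\,(\ldots)$, that is, they lie in the Kahr class $(\forall\exists\forall)^1$. Hence the two theories are in fact distinguished within this fragment, and the refined statement follows.

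The only real subtlety, and the point I would be most careful about, is the bookkeeping of indices: the braid group on $N$ strands corresponds to $A_{N-1}$, not $A_N$, and one should confirm the degenerate small cases fit the pattern (the one-strand group is trivial and the two-strand group is $\mathbb{Z}\cong A_1$). These are routine, and with the correct index shift the corollary is an immediate specialization of Theorem \ref{main theorem}(1); I do not expect any genuine obstacle beyond this identification.
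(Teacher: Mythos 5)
Your proposal is correct and follows exactly the paper's argument: identify the braid group on $N$ strands with the Artin group $A_{N-1}$ and apply Theorem \ref{main theorem}(1), with the $(\forall\exists\forall)^1$ refinement following because the distinguishing sentences $\Phi_k$ already lie in that fragment. The paper's own proof is just a terser version of this same identification, so there is nothing to add beyond your (correct) attention to the index shift.
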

\begin{proof}
The class $\{A_n\}_{n\in\mathbb{N}}$ represents braid groups: the group $A_{n}$ is the braid group on $n+1$ strands.
\end{proof}

\end{document}